\definecolor{modra3}{rgb}{.1,.0,.4}
\theoremstyle{plain}
\newtheorem{theorem}{Theorem} 
\newtheorem*{claim_A}{Claim A}
\newtheorem*{claim_B}{Claim B}
\begin{document}

\title{Unified Hanani--Tutte theorem}

\author{
Radoslav Fulek\thanks{IST, Klosterneuburg, Austria;
\texttt{radoslav.fulek@gmail.com}. Supported by the People Programme (Marie Curie Actions) of the European Union's Seventh Framework Programme (FP7/2007-2013) under REA grant agreement no [291734].}
  \and
Jan Kyn\v{c}l\thanks{Department of Applied Mathematics and Institute for Theoretical Computer Science, 
Charles University, Faculty of Mathematics and Physics, 
Malostransk\'e n\'am.~25, 118 00~ Praha 1, Czech Republic;
\texttt{kyncl@kam.mff.cuni.cz}. Supported by project 
16-01602Y of the Czech Science Foundation (GA\v{C}R).}
  \and
D\"om\"ot\"or P\'alv\"olgyi\thanks{University of Cambridge, UK;
\texttt{dom@cs.elte.hu}. Supported by the Marie Sk\l odowska-Curie
action of the EU, under grant IF 660400.} }

\date{}

\maketitle

\centerline{Dedicated to the 100th anniversary of William Thomas Tutte}


\begin{abstract}
We introduce a common generalization of the strong Hanani--Tutte theorem and the weak Hanani--Tutte theorem: if a graph $G$ has a drawing $D$ in the plane where every pair of independent edges crosses an even number of times, then $G$ has a planar drawing preserving the rotation of each vertex whose incident edges cross each other evenly in $D$. The theorem is implicit in the proof of the strong Hanani--Tutte theorem by Pelsmajer, Schaefer and {\v{S}}tefankovi{\v{c}}. We give a new, somewhat simpler proof.
\end{abstract}

\section{Introduction}
\label{section_intro}
The Hanani--Tutte theorem~\cite{Ha34_uber,Tutte70_toward} is a classical result that provides an algebraic characterization of planarity with interesting theoretical and algorithmic consequences.
The (strong) Hanani--Tutte theorem states that a graph is planar if it can be drawn in the plane so that no pair of independent edges crosses an odd number of times.
Moreover, its variant known as the weak Hanani--Tutte theorem~\cite{CN00_thrackles,PT00_which,PSS06_removing} states that if $G$ has a drawing $\mathcal{D}$ where every pair of edges crosses an even number of times, then $G$ has an embedding that preserves the cyclic order of edges at each vertex of $\mathcal{D}$.
The weak variant earned its name because of its stronger assumptions; however, it does not directly follow from the strong variant since its conclusion is stronger than ``mere'' planarity.
For sub-cubic graphs, the weak variant implies the strong variant, since in this case pairs of adjacent edges crossing oddly can be dealt with by a local redrawing in a small neighborhood of each vertex.

We observe that there is a common generalization of both the strong and the weak variant, which seems to have been overlooked in the literature.

\begin{theorem}[Unified Hanani--Tutte theorem]
\label{theorem_stronger}
Let $G$ be a graph and let $W$ be a subset of vertices of $G$. Let $\mathcal{D}$ be a drawing of $G$ where every pair of edges that are independent or have a common endpoint in $W$ cross an even number of times. Then $G$ has a planar embedding where cyclic orders of edges at vertices from $W$ are the same as in $\mathcal{D}$.
\end{theorem}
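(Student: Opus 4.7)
My plan is to reduce Theorem~\ref{theorem_stronger} to the strong Hanani--Tutte theorem by gluing a small cycle gadget to each vertex of $W$ and contracting it back afterwards. For each $v\in W$ with $d:=\deg(v)\ge 3$ and rotation $(e_1,\ldots,e_d)$ in $\mathcal{D}$, I introduce new vertices $v_1,\ldots,v_d$, redirect each $e_i$ so it now ends at $v_i$ instead of $v$, and add the cycle $C_v=v_1v_2\cdots v_d v_1$; vertices of $W$ of degree at most $2$ carry a trivial rotation and need no gadget. Call the enlarged graph $G'$ and draw each $C_v$ as a tiny planar circle around the former position of $v$, with the $v_i$ placed on it in the prescribed cyclic order and the truncated $e_i$ ending at the corresponding $v_i$; this yields a drawing $\mathcal{D}'$ of $G'$.

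The pair $(G',\mathcal{D}')$ satisfies the hypothesis of the strong Hanani--Tutte theorem. Any pair of independent edges of $G'$ involving a cycle edge does not cross, since the cycles live in tiny disks disjoint from the rest of the drawing. A pair of original edges that is independent in $G'$ is either already independent in $G$ (then even by hypothesis) or shared an endpoint in $W$ and now terminates at two distinct gadget vertices (then even by the $W$-part of the hypothesis). Thus the strong Hanani--Tutte theorem supplies a planar embedding $\phi'$ of $G'$. Now contract each $C_v$ back to a single vertex $v$: under the assumption that $G$ is $2$-connected, $G'\setminus C_v=G-v$ is connected and must lie in a single one of the two regions cut out by the Jordan curve $C_v$ in $\phi'$; therefore every external edge $e_i$ leaves $v_i$ into that same region, and the contracted rotation reads off the $v_i$'s along $C_v$ from that side, giving precisely $(e_1,\ldots,e_d)$ up to a global reflection of the plane, which we are free to apply.

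The main obstacle is handling graphs that are not $2$-connected. For a cut vertex $v\in W$, the subgraph $G-v$ splits into several components that could in principle land in different regions of $C_v$, spoiling the rotation transfer. However, a classical linking argument --- two independent edges whose four endpoints interleave around $v$ in the complement of $v$ must cross an odd number of times --- shows that the unified hypothesis already forces the rotation of $\mathcal{D}$ at each cut vertex of $W$ to respect the block decomposition of $G$. Once this compatibility is in hand, the theorem reduces blockwise to the $2$-connected case treated above; the rest is bookkeeping.
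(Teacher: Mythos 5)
Your gadget construction --- truncating each $v\in W$ by a cycle $C_v$ ordered by the rotation of $v$ in $\mathcal{D}$, then invoking the strong Hanani--Tutte theorem --- is a natural idea, and the verification that the truncated drawing $\mathcal{D}'$ is independently even is correct. The fatal gap is in the last step of the $2$-connected case: the strong Hanani--Tutte theorem gives you \emph{some} planar embedding $\phi'$ of $G'$, over which you have no control whatsoever. Granting the connectivity argument that all external edges exit each $C_v$ on the same side, you only learn that, for each $v\in W$ \emph{separately}, the contracted rotation is $(e_1,\dots,e_d)$ or its reverse. You have no mechanism to force these choices to be coherent across different vertices of $W$. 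Already for $2$-connected but not $3$-connected $G$, the auxiliary graph $G'$ admits Whitney flips across $2$-cuts that reverse the rotation at one gadget cycle while fixing another, and nothing guarantees that the embedding Hanani--Tutte hands you is the coherent one; a single global reflection of the plane repairs the case where \emph{all} rotations are reversed, not a mixture. This is precisely the obstruction the introduction alludes to when it says the weak theorem does not follow directly from the strong one: with $W=V(G)$ your reduction would turn a pure planarity statement into a rotation-preserving one for free, which cannot be done. To close the gap you would have to argue separately that the orientations of the gadget cycles can be made consistent by re-embedding across $2$-separations --- and establishing that such a coherent embedding exists is essentially the whole content of the theorem.

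The paper sidesteps this by inducting on connectivity and, in the $3$-connected base case, reducing to the \emph{weak} Hanani--Tutte theorem after local redrawings that remove odd crossings among adjacent edges; since the weak theorem already preserves rotations, orientation control is available at the base and is carried through the gluing steps via Claims A and B. Your proposal instead appeals to the \emph{strong} theorem, which forgets all rotation information, and then tries to recover it by contraction --- this recovers it only vertex-by-vertex up to sign. Finally, the closing paragraph (``a classical linking argument shows \dots the rest is bookkeeping'') is asserted rather than argued: even taking the interleaving observation for granted, you would still need to explain how to assemble the blockwise embeddings so that the rotation at each cut vertex of $W$ is actually realized, which is exactly what the paper's Case~1 and Case~2 (with Claims A and B) do and which is not mere bookkeeping.
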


By setting $W=\emptyset$ we obtain the strong Hanani--Tutte theorem, while $W=V(G)$ gives the weak variant.

Theorem~\ref{theorem_stronger} directly follows from the proof of the Hanani--Tutte theorem by Pelsmajer, Schaefer and {\v{S}}tefankovi{\v{c}}~\cite{PSS06_removing}. In Section~\ref{section_proof} we give a new, slightly simpler proof by induction, based on case distinction of the connectivity of $G$ and using the weak Hanani--Tutte theorem as a base case. Our proof of 
Theorem~\ref{theorem_stronger} also gives an alternative proof of the strong Hanani--Tutte theorem, by reducing it to the weak variant for $3$-connected graphs.

In Section~\ref{section_outro} we show how to extend Theorem~\ref{theorem_stronger} to multigraphs.

The unified Hanani--Tutte theorem can be used to simplify the proof of the Hanani--Tutte theorem for clustered planarity with two clusters~\cite{FKMP15_revisited}, and perhaps other variants of the Hanani--Tutte theorem as well. It is also one of the base cases of the induction in the proof of a variant for clustered planarity with embedded pipes~\cite{FK17_pipes}. 


\section{Notation}
We assume that $G=(V,E)$ is a graph, with no loops or multiple edges.
We use the shorthand notation $G-v$ for $G[V\setminus \{v\}]$. 
A \emph{drawing} of $G$ is a representation of $G$ in the plane where every vertex is represented by a unique point and every
edge $e=uv$ is represented by a simple curve joining the two points that represent $u$ and $v$. If it leads to no confusion, we do not distinguish between
a vertex or an edge and its representation in the drawing and we use the words ``vertex'' and ``edge'' in both contexts. We assume that in a drawing no edge passes through a vertex,
no two edges touch, every edge has only finitely many intersection points with other edges and no three edges cross at the same inner point. In particular, every common point of two edges is either their common endpoint or a crossing.

A drawing of a graph is an \emph{embedding} or a \emph{planar drawing} if no two edges cross.

The \emph{rotation} of a vertex $v$ in a drawing is the clockwise cyclic order of the edges incident to $v$. We will represent the rotation of $v$ by the cyclic order of the other endpoints of the edges incident to $v$. 

We say that two edges in a graph are \emph{independent} if they do not share a vertex.
An edge in a drawing is {\em even\/} if it crosses every other edge an even number of times.
A vertex $v$ in a drawing is {\em even\/} if all the edges incident to $v$ cross each other an even number of times.
A drawing of a graph is \emph{even} if all its edges are even.
A drawing of a graph is \emph{independently even} if every pair of independent edges in the drawing cross an even number of times.

\section{Proof of Theorem~\ref{theorem_stronger}}
\label{section_proof}

Assume that $G=(V,E)$ and let $n=|V|$.
We proceed by induction on $n$. The theorem is trivial for $n=1$. 

Assume that $n\ge 2$. We distinguish four cases according to the connectivity of $G$.

\paragraph*{Case 0:} If $G$ is disconnected, we can use the inductive hypothesis for each component separately and draw the components in disjoint regions in the plane. 
\paragraph*{Case 1:} $G$ is connected and has a separating vertex $v$. 
Let $G'_1, G'_2, \dots, G'_k$ be the connected components of $G-v$. For each $i\in[k]$, let $G_i=G[V(G'_i)\cup\{v\}]$ be the subgraph of $G$ induced by the vertices of $G'_i$ and $v$. By the inductive hypothesis, $G_i$ has an embedding $\mathcal{D}_i$ preserving the rotations of even vertices in $\mathcal{D}$. We may assume that $v$ is incident with the outer face of $\mathcal{D}_i$. By gluing all the drawings $\mathcal{D}_i$ at $v$ we obtain an embedding of $G$ preserving the rotations of all even vertices from $V(G)\setminus\{v\}$ in $\mathcal{D}$. If $v$ is not an even vertex in $\mathcal{D}$, the proof is finished.

If $v$ is an even vertex in $\mathcal{D}$, we need to glue the drawings $\mathcal{D}_i$ in a very particular way to preserve the rotation of $v$. To this end, we show that the rotation of $v$ in $\mathcal{D}$ is of a special form; see Figure~\ref{obr_obr_case_1}.

\begin{claim_A}
If $v$ is even in $\mathcal{D}$, then there is an $i\in [k]$ such that the edges of $G_i$ incident with $v$ are consecutive in the rotation of $v$ in $\mathcal{D}$, and thus they form a well-defined clockwise linear order $\mathcal{R}_i$.
\end{claim_A}


\begin{proof}
Let $\mathcal{I}$ be a minimal cyclic interval of consecutive edges in the rotation of $v$ such that for some $i\in [k]$, all the edges of $G_i$ incident with $v$ are in $\mathcal{I}$. The claim will follow from the fact that $\mathcal{I}$ contains only edges of $G_i$.

Suppose that $\mathcal{I}$ contains an edge $e$ of $G_j$ for some $j\in[k]\setminus\{i\}$. From the minimality of $\mathcal{I}$, the edge $e$ is not first nor last in $\mathcal{I}$, and there is at least one more edge of $G_j$ incident with $v$ that does not belong to $\mathcal{I}$. That is, we have
distinct indices $i,j\in[k]$ and vertices $a,b\in V(G'_i)$, $c,d\in V(G'_j)$ such that the edges $va,vc,vb,vd$ appear in the rotation of $v$ in this cyclic order. Let $C_i$ be a cycle in $G_i$ extending the path $avb$ and let $C_j$ be a cycle in $G_j$ extending the path $cvd$. Since $\mathcal{D}$ is independently even and $v$ is even, every edge of $C_i$ crosses every edge of $C_j$ an even number of times in $\mathcal{D}$. However, the closed curves representing $C_i$ and $C_j$ in $\mathcal{D}$ cross at $v$, which implies that they have an odd number of common crossings; a contradiction.
\end{proof}

\begin{figure}
\begin{center}
\epsfbox{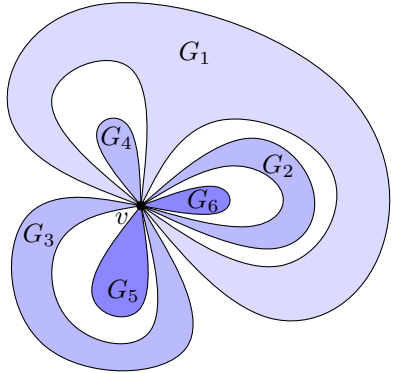}
\end{center}
\caption{A sketch of an embedding of a connected graph with a separating vertex $v$. Subgraphs $G_4$, $G_5$ and $G_6$ satisfy Claim A, while $G_1$, $G_2$ and $G_3$ do not.}
\label{obr_obr_case_1}
\end{figure}

Let $G_i$ be the subgraph from Claim A. By induction, we obtain an embedding $\mathcal{D}_i$ of $G_i$ and an embedding $\mathcal{E}_i$ of $G[V(G)\setminus V(G'_i)]$. We may choose the outer face of $\mathcal{D}_i$ so that when starting in the outer face, the clockwise linear order of the edges incident with $v$ in $\mathcal{D}_i$ is identical with the order $\mathcal{R}_i$. Now we can glue $\mathcal{D}_i$ in an appropriate face of $\mathcal{E}_i$ incident with $v$ to obtain an embedding of $G$ where the rotation of $v$ is the same as the rotation of $v$ in $\mathcal{D}$.

\paragraph*{Case 2:} $G$ is $2$-connected and has a separating pair $u,v$. 
Let $G''_1, G''_2, \dots, G''_k$ be the connected components of $G[V\setminus \{u,v\}]$. For each $i\in[k]$, let $G'_i$ be the graph obtained from $G''_i$ by adding the vertices $u,v$ and all edges of $G$ joining $u$ or $v$ with $G''_i$. Let $G_i$ be the graph obtained from $G'_i$ by adding the edge $uv$. Since each $G'_j$ contains a path $P_j$ from $u$ to $v$, we can obtain an independently even drawing of each $G_i$ from the drawing of $G'_i$ and $P_j$, for some $j\neq i$, as follows. We start with the curve $\gamma_j$ representing $P_j$, and sequentially remove each self-crossing of $\gamma_j$ by a local redrawing; see Figure~\ref{obr_self_crossing}. In this way we obtain a simple curve joining $u$ with $v$, drawn in a small neighborhood of $\gamma_j$, and crossing every edge of $G'_i$ not incident with $u,v$ an even number of times. 

\begin{figure}
\begin{center}
\epsfbox{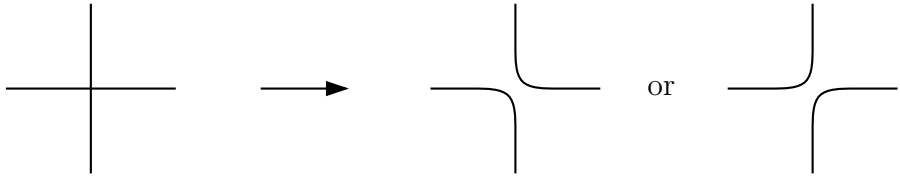}
\end{center}
\caption{Removing a self-crossing.}
\label{obr_self_crossing}
\end{figure}

By the inductive hypothesis, $G_i$ has an embedding $\mathcal{D}_i$ preserving the rotations of even vertices in $\mathcal{D}$. We may assume that the edge $uv$ is incident with the outer face of $\mathcal{D}_i$. By gluing all the drawings $\mathcal{D}_i$ at the edge $uv$ and possibly removing this edge we obtain an embedding of $G$ preserving the rotations of all even vertices of $G[V\setminus \{u,v\}]$ in $\mathcal{D}$. 

If $v$ is an even vertex, using Claim A for the graph $G-u$ we get that the edges of some subgraph $G'_i$ incident with $v$ are consecutive in the rotation of $v$ in $\mathcal{D}$, and thus they form a well-defined clockwise linear order. Moreover, this linear order coincides with the clockwise linear order of the edges incident with $v$ in $\mathcal{D}_i$, if we read the rotation of $v$ starting from the edge $uv$. In fact, since $G$ is $2$-connected, each subgraph $G'_i$ has this property in this case, as we show in Claim B. 

Let $H_i$ be a subgraph of $G$ obtained from $G[V(G)\setminus V(G''_i)]$ by adding the edge $uv$ if it is not present in $G$. We obtain an independently even drawing of $H_i$ by drawing the edge $uv$ along the curve $\gamma_i$ in $\mathcal{D}$ and by removing self-crossings.
By induction, we obtain an embedding $\mathcal{E}_i$ of $H_i$. We can now glue $\mathcal{D}_i$ in one of the two faces of $\mathcal{E}_i$ incident with $uv$ in such a way that the rotation of $v$ is preserved. Finally we remove the edge $uv$ if needed. If $u$ is not an even vertex, we are finished. The case when $u$ is even but $v$ is not is analogous.


If both $u$ and $v$ are even, we need the rotations of $u$ and $v$ in $\mathcal{D}$ to be ``compatible'' in the sense that the cyclic orders of the graphs $G'_i$ around $u$ and $v$ are opposite; see Figure~\ref{obr_obr_case_2}. If $uv\in E(G)$, we also have to include the graph $G'_0$ consisting of just the vertices $u,v$ and the edge $uv$, and we define $P_0$ as $G'_0$. The following claim finishes the proof of this case.

\begin{figure}
\begin{center}
\epsfbox{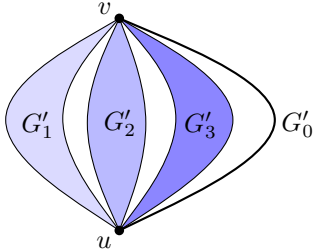}
\end{center}
\caption{A sketch of an embedding of a $2$-connected graph with a separating pair $u,v$.}
\label{obr_obr_case_2}
\end{figure}

\begin{claim_B}
If $v$ is even in $\mathcal{D}$, then for each $i\in [k]$, the edges of $G'_i$ incident with $v$ are consecutive in the rotation of $v$ in $\mathcal{D}$, and thus they form a well-defined clockwise linear order. Moreover, this gives a well-defined clockwise cyclic order $\mathcal{C}_v$ of the graphs $G'_i$ around $v$. If both $u$ and $v$ are even, then the analogously defined order $\mathcal{C}_u$ is inverse to $\mathcal{C}_v$.
\end{claim_B}

\begin{proof}
Assume that $v$ is even. Suppose, for a contradiction, that there are indices $i,j,j'\in[k]\cup \{0\}$ with $i$ distinct from $j$ and $j'$, but $j$ and $j'$ possibly equal, and vertices $a,b\in V(G'_i)$, $c\in V(G'_j)$, $d\in V(G'_{j'})$ such that the edges $va,vc,vb,vd$ appear in the rotation of $v$ in this cyclic order.
Let $C_i$ be a cycle in $G'_i-u$ extending the path $avb$ and let $C_j$ be a cycle in $G[V(G)\setminus V(G''_i)]$ extending the path $cvd$. Notice that the cycles $C_i$ and $C_j$ share only the vertex $v$. Since $\mathcal{D}$ is independently even and $v$ is even, every edge of $C_i$ crosses every edge of $C_j$ an even number of times in $\mathcal{D}$. However, the closed curves representing $C_i$ and $C_j$ in $\mathcal{D}$ cross at $v$, which implies that they have an odd number of common crossings; a contradiction.

Now assume that both $u$ and $v$ are even. Let $i,j,l\in [k]\cup \{0\}$ be three distinct indices. The three paths $P_i,P_j,P_l$ form an independently even subdrawing of $\mathcal{D}$, which can be easily changed into an even drawing by a local change in the neighborhoods of the internal vertices of the three paths. Using the weak Hanani--Tutte theorem or the parity of the winding number of the cycle $P_i\cup P_j$ around inner points of the curve representing $P_l$, this implies that the cyclic orders of the paths around $u$ and $v$ are opposite. Consequently, the clockwise orders of the graphs $G'_i, G'_j, G'_l$ around $u$ and $v$ are opposite. Since this is true for all the triples $i,j,l$, the cyclic order $\mathcal{C}_u$ is inverse to $\mathcal{C}_v$.
\end{proof}

\paragraph*{Case 3:} $G$ is $3$-connected. In this case we show that it is possible to change the rotations of the vertices locally to get an even drawing of $G$. The theorem will then follow from the weak Hanani--Tutte theorem.

Let $v$ be a vertex of $G$ with a pair of incident edges crossing oddly in $\mathcal{D}$. Let $uv$ be an arbitrary edge incident to $v$. By redrawing the other edges incident with $v$ in a small neighborhood of $v$, we can make them cross $uv$ an even number of times. Next, if two edges $f_1, f_2$ incident with $v$ and consecutive in the rotation of $v$ cross oddly, we can make them cross evenly by a local redrawing that swaps their position in the rotation, introduces exactly one crossing between $f_1$ and $f_2$, and does not change the parity of crossings of any other pair of edges. Let $\mathcal{D}'$ be a drawing of $G$ obtained after all these adjustments. Let $d$ be the degree of $v$ and let $(u_0=u,u_1,u_2,\dots,u_{d-1})$ be the rotation of $v$ in $\mathcal{D}'$. After the adjustments, $vu_0$ crosses every other edge $vu_i$ an even number of times, and for each $i\in \{1,2,\dots,d-2\}$, the edge $vu_i$ crosses the edge $vu_{i+1}$ an even number of times. We claim that this implies that $v$ is an even vertex in $\mathcal{D}'$; that is, every pair of edges $vu_i$, $vu_j$ crosses an even number of times.

Suppose, for a contradiction, that $v$ is not an even vertex in $\mathcal{D}'$. Let $i,j$ be indices with $i<j$ and with smallest difference $j-i$ such that $vu_i$ and $vu_j$ cross an odd number of times. Let $k=i+1$. Since $j-i\ge 2$, we have $i<k<j$. Among the edges $vu_0,vu_i,vu_k,vu_j$, only the pair $vu_i,vu_j$ crosses an odd number of times. Since $G$ is $3$-connected, the graph $G-v$ is $2$-connected. Hence, by Menger's theorem (see e.g. Diestel~\cite[Theorem 3.3.1]{Di16_fifth}), there are two vertex-disjoint paths between $\{u_0,u_k\}$ and $\{u_i,u_j\}$ in $G-v$. Together with the edges $vu_0,vu_i,vu_k,vu_j$, the paths form two edge-disjoint cycles $C_1,C_2$, which share only the vertex $v$. Except the pair $vu_i,vu_j$, every other pair of edges $e\in E(C_1)$ and $f\in E(C_2)$ crosses an even number of times. Moreover, the edges of the two cycles incident with $v$ do not alternate around $v$; that is, $C_1$ and $C_2$ ``touch'' at $v$. This implies that the two closed curves representing $C_1$ and $C_2$ in $\mathcal{D}'$ cross an odd number of times; a contradiction.

\section{Concluding remarks}\label{section_outro}
A reviewer raised the question whether a similar connectivity approach could also be used to prove the weak Hanani--Tutte theorem. We don't see how our approach could simplify currently known proofs, including the two recent beautiful redrawing proofs~\cite{PSS06_removing,FPSS12_adjacent}, which are quite simple already. 

A slightly different connectivity approach to the weak Hanani--Tutte theorem has been applied before: for example, there are weaker variants that hold for $2$-connected graphs. Schaefer~\cite{Sch13_hananitutte} surveyed several variants of planarity characterizations similar to the weak Hanani--Tutte theorem. One of them, due to Lov\'asz, Pach and Szegedy~\cite{LPS97_thrackle}, characterizes planar graphs as graphs that have a drawing where each $\Theta$-graph, that is, a subgraph that is a union of three paths with a common pair of endpoints $u,v$, is drawn so that the cyclic orders of the three paths around $u$ and $v$ are opposite. Schaefer~\cite[Theorem 1.9]{Sch13_hananitutte} showed that such a drawing can be changed to an embedding where the rotation system of every $2$-connected subgraph is preserved. The same conclusion is true for drawings where every cycle has an even number of self-crossings~\cite[Theorem 1.4]{Sch13_hananitutte}. Again, we do not see how our approach could bring new insights or simplifications to these variants.

\subsection{Multigraphs}
It is quite straightforward to generalize the strong and weak Hanani--Tutte theorems to multigraphs; in fact, Pelsmajer, Schaefer and {\v{S}}tefankovi{\v{c}}~\cite{PSS06_removing} proved the weak version directly for multigraphs, since they appeared naturally in their proof. The strong Hanani--Tutte theorem for multigraphs follows from the graph version using the fact that a multigraph is planar if and only if its underlying graph is planar.

Theorem~\ref{theorem_stronger} can also be extended to arbitrary multigraphs with multiple edges and loops, as follows.

\begin{theorem}[Unified Hanani--Tutte theorem for multigraphs]
Let $H$ be a multigraph and let $W$ be a subset of vertices of $H$. Let $\mathcal{D}$ be a drawing of $H$ where every pair of edges that are independent or have a common endpoint in $W$ cross an even number of times. Then $H$ has a planar embedding where cyclic orders of edges at vertices from $W$ are the same as in $\mathcal{D}$.
\end{theorem}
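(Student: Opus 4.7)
The plan is to reduce the multigraph statement to the weak Hanani--Tutte theorem for multigraphs, which, as noted above, was proven directly for multigraphs by Pelsmajer, Schaefer and {\v{S}}tefankovi{\v{c}}. The reduction is a single pre-processing step that converts $\mathcal{D}$ into an even drawing via local redrawings at vertices outside $W$, while leaving the rotations at vertices of $W$ unchanged.

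At each vertex $v \notin W$, I perform \emph{finger moves} to make every pair of edges incident to $v$ cross an even number of times. A finger move on an edge $e$ targeting another edge $e'$, both incident to $v$, attaches a small detour to $e$ near $v$ that passes through the angular sector at $v$ between $e$ and $e'$, crosses $e'$ exactly once, and crosses every other edge lying in that sector an even number of times (once going out and once coming back). Such a move flips the crossing parity of the pair $\{e,e'\}$ and leaves all other crossing parities unchanged. Since the detour is attached to $e$ at points near $v$ but not at $v$ itself, the tangent direction of $e$ at $v$---and hence the rotation at $v$---is also preserved. Because a finger move at $v$ only changes crossings of pairs of edges both incident to $v$, moves at different vertices are independent, and we can process each $v \notin W$ in turn, performing one finger move per currently-odd pair at that vertex.

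After this pre-processing, every pair of distinct edges in $H$ crosses evenly: pairs sharing an endpoint in $W$ by the original hypothesis, pairs sharing an endpoint outside $W$ by the pre-processing, and independent pairs by the hypothesis (which the local moves preserve). Moreover, the rotation at every vertex of $W$ is identical to that of $\mathcal{D}$. Applying the weak Hanani--Tutte theorem for multigraphs to this now even drawing yields a planar embedding of $H$ preserving the rotation at every vertex, and in particular at every vertex of $W$, as required.

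The main obstacle is the construction of the finger moves themselves. Producing a detour that crosses $e'$ once and each intermediate edge evenly, while leaving edges outside the chosen sector and everything far from $v$ untouched, is a purely local planar-drawing task; it is straightforward when $e$ and $e'$ are ordinary (non-loop) edges incident to $v$, but requires slightly more care when either is a loop (whose two ends both contribute to the rotation at $v$) or when several parallel edges are present at $v$. Given the strict locality of each move, interference between moves at different vertices causes no difficulty.
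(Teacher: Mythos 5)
The reduction you propose --- locally ``evening out'' the drawing at every $v\notin W$ by rotation-preserving finger moves and then invoking the weak Hanani--Tutte theorem --- cannot work, because the finger move you describe does not exist. A finger attached to $e$ near $v$ must start and end on $e$. If it enters the sector towards $e'$ and then turns back, it meets $e'$ an even number of times (zero or two), so the parity of $\{e,e'\}$ is unchanged; if instead it continues past $e'$ and goes all the way around $v$ to rejoin $e$, it crosses \emph{every} edge incident with $v$ exactly once. More precisely, if one parametrises the arcs inside a small disk around $v$ by their winding numbers relative to a reference drawing (keeping both the rotation at $v$ and the boundary intersections fixed), then the change in crossing parity between the $i$-th and $j$-th edge is $w_i+w_j \pmod 2$. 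Hence the set of parity changes achievable by rotation-preserving local redrawings at $v$ is exactly the cut space of the complete graph on the edges at $v$; a single pair, which is what your move claims to flip, lies outside this space whenever $\deg(v)\ge 3$.

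If your preprocessing step were available, it would show that every independently even drawing can be converted to an even drawing by rotation-preserving local moves, i.e.\ that the strong Hanani--Tutte theorem is a trivial corollary of the weak one with \emph{all} rotations preserved. This is false, and it is precisely the point of the reference ``Adjacent crossings do matter'' \cite{FPSS12_adjacent}; it is also why Case~3 of the paper's proof of Theorem~\ref{theorem_stronger} performs local moves that \emph{do} change the rotation of the chosen vertex and then needs the $3$-connectivity/Menger argument to conclude, and why Cases~0--2 carry out a connectivity decomposition rather than a purely local repair. The paper's actual proof of the multigraph statement takes a different route entirely: subdivide each edge incident with an even vertex (loops twice), delete the resulting parallel edges and loops, apply Theorem~\ref{theorem_stronger} for simple graphs, and then reinsert the deleted material and contract the subdivision vertices. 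Your argument needs to be replaced by something of this shape, or at least by something that uses the global structure of $H$, rather than a purely local evening-out of $\mathcal{D}$.
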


\begin{proof}
Given an independently even drawing $\mathcal{H}$ of a multigraph $H$, for each even vertex $v$ in $\mathcal{H}$ we subdivide each edge incident with $v$ by a new vertex placed very close to $v$. Each loop incident with $v$ is subdivided twice. The resulting drawing $\mathcal{H}'$ is still independently even, and no loop or multiple edge is incident to an even vertex of $\mathcal{H}'$. Let $\mathcal{H}''$ be a drawing of a graph $H''$ obtained from $\mathcal{H}'$ by removing all loops and multiple edges. By Theorem~\ref{theorem_stronger}, $H''$ has an embedding $\mathcal{E}$ preserving the rotation of all even vertices in $\mathcal{H}''$, which include all even vertices in $\mathcal{H}$. We add the removed edges and loops back to $\mathcal{E}$ without crossings, by drawing the multiple edges along their copies in $\mathcal{E}$, and drawing the loops close to the vertices they are incident to. Finally, by contracting the subdivided edges, we obtain an embedding of $H$ preserving the rotations of all even vertices in $\mathcal{H}$. 
\end{proof}

\section*{Acknowledgements}
We thank the reviewers for helpful suggestions, especially for noticing an error in a previous version of the proof.



\end{document}